\documentclass[10pt]{article}
\usepackage{latexsym}
\usepackage{indentfirst}
\usepackage{amssymb,amsfonts,amsmath,amsthm}
\usepackage{graphicx}
\usepackage{mathrsfs}
\usepackage{array}
\usepackage{color}
\usepackage{epstopdf}
\usepackage[all]{xy}
\usepackage{hyperref}
\usepackage{url}
\usepackage{tikz-cd}

\usepackage{geometry}
\newtheorem{thm}{Theorem}[section]
\newtheorem{prop}[thm]{Proposition}
\newtheorem{lem}[thm]{Lemma}

\newtheorem{cor}[thm]{Corollary}

\newtheorem{rmk}[thm]{Remark}

\numberwithin{equation}{section}

\newcommand{\frakF}{{\mathfrak F}}

\newcommand{\frakM}{{\mathfrak M}}

\newcommand{\frakS}{{\mathfrak S}}

\newcommand{\bF}{{\mathbb F}}

\newcommand{\bM}{{\mathbb M}}
\newcommand{\bN}{{\mathbb N}}

\newcommand{\bQ}{{\mathbb Q}}

\newcommand{\bZ}{{\mathbb Z}}

\newcommand{\calF}{{\mathcal F}}

\newcommand{\calM}{{\mathcal M}}
\newcommand{\calN}{{\mathcal N}}
\newcommand{\calO}{{\mathcal O}}

\newcommand{\Ainf}{{\mathrm{A_{inf}}}}

\newcommand{\Bcris}{{\mathrm{B_{cris}}}}

\newcommand{\Fil}{{\mathrm{Fil}}}           %Filtration
\newcommand{\Hom}{{\mathrm{Hom}}}           %Hom-functor
\newcommand{\Ker}{{\mathrm{Ker}}}           %kernal
\newcommand{\Mod}{{\mathrm{Mod}}}           %Module

\newcommand{\Spf}{{\mathrm{Spf}}}           %formal affine schemes
\newcommand{\Vect}{{\mathrm{Vect}}}         %Vector bundle

\newcommand{\perf}{\mathrm{perf}}           % perfection
\newcommand{\syn}{{\mathrm{Syn}}}
\newcommand{\Coh}{{\mathrm{Coh}}}
\newcommand{\refl}{{\mathrm{refl}}}
\usepackage{relsize}
\usepackage[bbgreekl]{mathbbol}
\DeclareSymbolFontAlphabet{\mathbb}{AMSb} %to ensure that the meaning of \mathbb does not change
\DeclareSymbolFontAlphabet{\mathbbl}{bbold}
\newcommand{\Prism}{{\mathlarger{\mathbbl{\Delta}}}} % Prism

\usepackage{titletoc}
\titlecontents{section}[1.72em]{\vspace{1pt}}%
    {\thecontentslabel.\enspace}%numbered sections
    {}%numberless section
    {\titlerule*[0.5pc]{.}\contentspage}%

\titlecontents{subsection}[1.72em]{\vspace{1pt}}%
    {\thecontentslabel.\enspace}%numbered sections
    {}%numberless section
    {\titlerule*[0.5pc]{.}\contentspage}%
\begin{document}
\title{Congruences of first syntomic cohomology groups}
\author{Yu Min}
\date{}
\maketitle
\setcounter{section}{1}

\begin{abstract}
    Let $\calO_K$ be the ring of integers of a finite extension $K$ of $\bQ_p$. Given two reflexive $F$-gauges on  $\calO_K$, we show that for large enough $n$, the mod $p^n$-reductions of their first syntomic cohomology groups, which might be regarded as a refinement of local Bloch--Kato Selmer groups, are isomorphic if and only if  the mod $p^{2n}$-reductions of their attached Breuil--Kisin modules with $G_K$-actions and Nygaard filtrations are isomorphic.\\

\end{abstract}

%\begin{abstract}
%    Without any restriction on the ramification degrees and the Hodge--Tate weights, we obtain isomophisms of finite Bloch--Kato Selmer groups for two crystalline $\bZ_p$-representations with isomorphic reductions modulo some power of $p$.
%\end{abstract}

Let $K$ be a finite extension of $\bQ_p$ with ring of integers $\calO_K$. Recall that for any crystalline $\bQ_p$-representation $V$, the (local) Bloch--Kato Selmer group is defined as follows
\[
H^1_f(K,V):=\Ker(H^1(K,V)\to H^1(K,V\otimes_{\bQ_p}\Bcris)).
\]

It consists of all the crystalline extensions of the trivial representation $\bQ_p$ by $V$. Let $T$ be a crystalline $\bZ_p$-representation. One can also define the integral Bloch--Kato Selmer group $H^1_f(K,T)$ as a pullback
\[
\begin{tikzcd}
    H^1_f(K,T) \arrow[r] \arrow[d]
    \arrow[dr, phantom, "\ulcorner", very near start]
    & H^1_f(K,T[1/p]) \arrow[d] \\
    H^1(K,T) \arrow[r]
    & H^1(K,T[1/p]).
\end{tikzcd}
\]
Similarly, $H^1_f(K,T)$ is the group of all crystalline extensions of the trivial representation $\bZ_p$ by $T$.

Given a global field $F$ and a $\bQ_p$-representation of the absolute Galois group $G_F$ of $F$, one can define the global Bloch--Kato Selmer group by imposing conditions on each local class, where the $p$-adic class should correspond to an element in the local Bloch--Kato Selmer group as defined above.

The most interesting $p$-adic representations come from the $p$-adic realization of motives over $F$. Greenberg asked the question (cf. \cite[Introduction]{iovita15}): suppose we have two motives which are congruent modulo some power of $p$, what can we say about the two families of the $p$-Selmer groups?

Let us examine this question  in the local context. Let $T$ still be a crystalline $\bZ_p$-representation of $G_K$. One can consider the Galois cohomology group $H^1(K,T/p^n)$ and define tentatively the ``Selmer group of $T/p^n$ " as the subgroup $H^1_f(K,T/p^n)$ of $H^1(K,T/p^n)$ which is the image of $H^1_f(K,T)$ along the natural map $H^1(K,T)\to H^1(K,T/p^n)$. Indeed, we have $H^1_f(K,T/p^n)=H^1_f(K,T)/p^n$. Now suppose $T_1, T_2$ are two crystalline $\bZ_p$-representations of $G_K$ and $T_1/p^n\cong T_2/p^n$ as $G_K$-representations for some $n$. Does $T_1/p^n\cong T_2/p^n$ yield any relation between $H^1_f(K,T_1/p^n)$ and $H^1_f(K,T_2/p^n)$? Are they isomorphic?

This question has been answered in several cases. We refer to \cite[Remark 1.14]{iovita15} for a nice summary. In particular, all the positive results are obtained under some restrictions on the ramification degree $e$ and the Hodge--Tate weight range $[0,r]$, for example $e(r-1)\leq p-1$ as in \cite{iovita15}. 

This restriction actually originates from the fact that the $G_K$-action on $T/p^n$ is too weak to capture all the information of the ``mod $p^n$ reduction" of the crystalline $\bZ_p$-representation $T$. The introduction of \cite{iovita15} has given two nice examples explaining why one cannot get positive results without certain restrictions on the ramification degrees and the Hodge--Tate weights. We now copy the two examples from \cite{iovita15}.

\begin{enumerate}
    \item  Consider $T_1=\bZ_p(1), T_2=\bZ_p(p), K=\bQ_p$ and $p\geq 3$. Then $T_1/p\cong T_2/p\cong \bF_p(1)$. But $H^1_f(\bQ_p,T_1/p)$ and $H^1_f(\bQ_p,T_2/p)$ turn out to be orthogonal with respect to the cup-product. 
    \item Consider two ordinary elliptic curves $E_1$ and $E_2$ over $K$, which contains all the coordinates of the $p$-torsion points $E_1[p]$ and $E_2[p]$. Hence $T_i/p=E_i[p]$ is a trivial $G_K$-module for $i=1,2$. Isomorphisms $T_1/p\cong T_2/p$ will barely give information about the relation between $H^1_f(\bQ_p,T_1/p)$ and $H^1_f(\bQ_p,T_2/p)$.
\end{enumerate}

In this short note, we consider another ``mod $p^n$ reduction" of crystalline $\bZ_p$-representations to produce an answer without imposing any restriction on the ramification degrees and the Hodge--Tate weights. Namely, we will investigate the mod $p^n$ reduction of the corresponding  reflexive coherent sheaves on the syntomification of $\calO_K$ as introduced by Bhatt and Lurie \cite{bhatt22}, which contain more information than the reductions of representations.

\begin{thm}\label{main}
Assume \cite[Conjecture 6.5.15]{bhatt22} holds for $\calO_K$.    Let $M,N$ be two reflexive coherent sheaves on $\calO_K^{\syn}$. There exists an integer $a$ such that for any isomorphism $\alpha: M/p^{2n}\cong N/p^{2n}$ for some $n\geq a$, one can get an isomorphism
    \[
  f_{\alpha}:  H^1(\calO_K^\syn,M)/p^{n}\to H^1(\calO_K^\syn,N)/p^{n},
    \]
    which is functorial in $\alpha$. 
\end{thm}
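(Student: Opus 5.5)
The plan is to reduce the statement to a standard ``torsion-bounded'' argument on perfect complexes. Reflexive coherent sheaves on $\calO_K^{\syn}$ have bounded syntomic cohomology, finitely generated over $\bZ_p$ in each degree. Write $R\Gamma(M):=R\Gamma(\calO_K^\syn,M)$. Since $M$ is $p$-torsion free (reflexive), the triangle $M\xrightarrow{p^m}M\to M/p^m$ gives a long exact sequence
\[
0\to H^0(M)/p^m\to H^0(M/p^m)\to H^1(M)[p^m]\to 0,
\qquad
0\to H^1(M)/p^m\to H^1(M/p^m)\to H^2(M)[p^m]\to 0,
\]
and similarly for $N$. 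Conjecture 6.5.15 of \cite{bhatt22}, as used in the paper, should guarantee that $H^i(\calO_K^\syn,M)$ is a finitely generated $\bZ_p$-module (equivalently, that $R\Gamma$ of a reflexive sheaf is computed by a perfect complex via the Breuil--Kisin/Nygaard description). I would therefore choose $a$ so that $p^{a}$ kills the torsion subgroups of $H^1(M),H^2(M),H^1(N),H^2(N)$.

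The construction of $f_\alpha$ then goes as follows. The isomorphism $\alpha$ induces a functorial isomorphism $\alpha_*:H^1(M/p^{2n})\cong H^1(N/p^{2n})$. Inside $H^1(M/p^{2n})$, I want to characterize the image of $H^1(M)/p^{2n}$, up to an error controlled by $p^n$, intrinsically in terms of $M/p^{2n}$. The candidate is $p^n H^1(M/p^{2n})$, or better, the image of $H^1(M/p^{2n})\xrightarrow{\cdot p^{n}}$ viewed through the map $M/p^{n}\xrightarrow{p^n}M/p^{2n}$. The key claim is that for $n\ge a$ the image of
\[
H^1(M/p^{2n})\to H^1(M/p^{n})
\]
(reduction) coincides with $H^1(M)/p^n$ plus a piece isomorphic to $H^2(M)[p^n]$ that we must kill. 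Instead I would use the composite $H^1(M/p^{2n})\to H^1(M/p^n)$ and take its image $I_M$. A diagram chase shows that this image is the preimage of $p^nH^2(M)[p^{2n}]$-classes, and since $H^2(M)_{\rm tors}$ is killed by $p^a\le p^n$, the image of $H^2(M)[p^{2n}]\to H^2(M)[p^n]$ is $p^n\cdot(\text{torsion})=0$. Hence $I_M=H^1(M)/p^n$ exactly. Because $I_M$ is defined purely from $M/p^{2n}$ and the reduction $M/p^{2n}\to M/p^n$ (which $\alpha$ respects, as $\alpha$ reduces to $\alpha \bmod p^n$), $\alpha$ induces $f_\alpha:I_M\cong I_N$. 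This $f_\alpha$ is functorial in $\alpha$ by construction, and it is an isomorphism because $\alpha^{-1}$ gives the inverse.

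The main obstacle is the finiteness input: showing that $H^1$ and $H^2$ of reflexive sheaves on $\calO_K^\syn$ are finitely generated, with torsion bounded, so that a single $a$ works. This is where the assumed conjecture enters: it identifies reflexive $F$-gauges with crystalline lattices and lets us compute $R\Gamma$ via a finite complex of Breuil--Kisin modules with Nygaard filtration, or through the $G_K$-equivariant description. I would first try to prove coherence of $R\Gamma(\calO_K^\syn,M)$ by dévissage from the mod $p$ case, using that $\calO_K^\syn$ has finite cohomological dimension and $R\Gamma$ of vector bundles mod $p$ is finite. The diagram chase itself, tracking the connecting maps through $p^n$ and verifying that the image characterization is independent of choices, is routine once this is in place.
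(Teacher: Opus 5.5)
Your argument is correct and is essentially the paper's. By exactness of the long exact sequence for $0\to M/p^n\xrightarrow{p^n}M/p^{2n}\to M/p^n\to 0$, your subgroup $I_M=\mathrm{im}\bigl(H^1(\calO_K^{\syn},M/p^{2n})\to H^1(\calO_K^{\syn},M/p^n)\bigr)$ is exactly the kernel of the Bockstein map the paper uses. Both arguments then identify this subgroup with $H^1(\calO_K^{\syn},M)/p^n$ through the same diagram of connecting maps (your $\beta_n\circ\mathrm{red}=p^n\beta_{2n}$, the paper's $\beta_B=\gamma\circ\beta$), once $p^n$ kills the torsion group $H^2(\calO_K^{\syn},M)$. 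The paper also invokes $H^3(\calO_K^{\syn},M)=0$, but only to make the Bockstein surjective, which the statement does not need; it simply cites perfectness of $R\Gamma(\calO_K^{\syn},M)$ rather than proving the finiteness input you propose to establish by d\'evissage.
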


 When $\calO_K=\bZ_p$, \cite[Conjecture 6.5.15]{bhatt22} is already proved in \cite[Section 6]{bhatt22}. If \cite[Conjecture 6.5.15]{bhatt22} holds for $\calO_K$, all results in \cite[Section 6]{bhatt22} we need then hold for prismatic $F$-gauges over $\calO_K^\syn$. For general proper regular $p$-adic formal schemes, Bhatt and Lurie claimed to have an outline of proof of this conjecture.

Before proceeding to the proof of Theorem \ref{main}, we discuss the difference between extensions of prismatic $F$-gauges and extensions of crystalline $\bZ_p$-representations. Let $T$ be the \'etale realization of $M$. By \cite[Theorem 6.6.13]{bhatt22}, we know $T$ is a finite free crystalline $\bZ_p$-representation of $G_K$. 
\begin{lem}
    There is an injection of groups
    \[
    H^1(\calO_K^\syn, M)\hookrightarrow H^1_f(K,T).
    \]
\end{lem}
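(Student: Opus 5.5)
We interpret $H^1(\calO_K^\syn,M)$ as $\mathrm{Ext}^1(\calO^\syn,M)$ in the category of quasi-coherent (or coherent) sheaves on $\calO_K^\syn$, i.e. as isomorphism classes of extensions $0\to M\to E\to \calO\to 0$ of prismatic $F$-gauges. The plan is to compose the étale realization functor $T_{\acute et}$ of \cite[Section 6]{bhatt22} with the identification of Galois extension classes, and then check injectivity using full faithfulness.

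\textbf{Construction of the map.} Given an extension $0\to M\to E\to\calO\to 0$, the sheaf $E$ is again reflexive: it is an extension of vector bundles away from the closed point, and reflexivity is closed under extensions since it can be tested by $\mathcal{E}xt$-vanishing / depth conditions that are two-out-of-three. Applying the exact étale realization yields an extension
\[
0\to T\to T(E)\to \bZ_p\to 0
\]
of finite free $\bZ_p$-representations of $G_K$. By \cite[Theorem 6.6.13]{bhatt22} (granting \cite[Conjecture 6.5.15]{bhatt22}), $T(E)$ is crystalline, so the class lies in $H^1_f(K,T)$. Equivalently, on the level of derived categories, the map is induced by $R\Gamma(\calO_K^\syn,M)\to R\Gamma(\mathrm{Spec}\,K,T)$, and its image lands in the Bloch--Kato subgroup by the crystallinity of extensions. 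Additivity follows from exactness of the realization, which commutes with Baer sums.

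\textbf{Injectivity.} Suppose that the extension $T(E)$ splits as a $G_K$-representation. Then the section $\bZ_p\to T(E)$ is a morphism between crystalline lattices. By full faithfulness of the étale realization on reflexive $F$-gauges \cite[Theorem 6.6.13]{bhatt22}, which identifies reflexive $F$-gauges with crystalline lattices, this section lifts uniquely to a map $\calO\to E$ whose composition with $E\to\calO$ realizes to the identity. Again by faithfulness, this composition is the identity, so the extension splits. Therefore the kernel is trivial.

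\textbf{Main obstacle.} The delicate step is ensuring that $E$ remains in the subcategory where the equivalence/full faithfulness holds, namely that $E$ is reflexive rather than merely coherent. I would verify this by restricting to the complement of the closed point, where $M$ and $\calO$ are vector bundles and so $E$ is as well. Then I would use the characterization of reflexive sheaves as those $j_*$ of their restriction, together with the fact that $R^1j_*$ of a vector bundle injects appropriately, ensuring that the pushforward sequence remains exact on $H^0$. If this fails in general, one can instead replace $E$ by its reflexive hull. The realization is insensitive to this replacement, and the extension class in $H^1$ is recovered up to elements killed after realization. In that case, injectivity would require the separate argument that the natural map $H^1(\calO_K^\syn,M)\to H^1(\calO_K^\syn\setminus\{x\},M)$ is injective, which follows from the vanishing of local cohomology $H^1_x(M)=0$ for reflexive $M$ (depth $\geq 2$).
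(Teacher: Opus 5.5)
Your proposal is correct and follows essentially the same route as the paper. You show the middle term of an extension is again reflexive, apply the exact \'etale realization to land in $H^1_f(K,T)$, and deduce injectivity from full faithfulness in \cite[Theorem 6.6.13]{bhatt22}; lifting a splitting section $\bZ_p\to T(E)$, as you do, is equivalent to the paper's lifting of an isomorphism between two extensions with trivial realization. The one step to tighten is reflexivity: it is not genuinely two-out-of-three, but the direction you need (an extension of reflexives is reflexive, by the depth lemma) holds after pulling back to the faithfully flat cover $\widehat B$, a $3$-dimensional regular Noetherian domain, which is how the paper argues, so your reflexive-hull fallback is unnecessary.
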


\begin{proof}
    For the ringed site $(\calO_K^\syn,\calO)$, we know that the syntomic cohomology group $H^1(\calO_K^\syn, M)$ is exactly the extension group of the trivial $F$-gauge $\calO$ by $M$.  For an extension $0\to M\to \tilde M\to \calO\to 0$, we claim that $\tilde M$ is also a  reflexive coherent sheaf. This can be checked by pulling back along the flat map $\calO_C^{\calN}\to\calO_K^{\syn}$ using \cite[Definition 6.6.4]{bhatt22}. Or another way to see this is to use the faithfully flat map $\widehat B\to \calO_K^{\syn}$ as in \cite[Remark 6.6.12]{bhatt22} where $\widehat B$ is a $3$-dimensional regular Noetherian domain. Any extension of  reflexive coherent $\widehat B$-modules is still a  reflexive coherent $\widehat B$-module.

    So the \'etale realization functor defines a natural map $H^1(\calO_K^\syn, M)\to H^1_f(K,T)$. To show it is injective, let $0\to M\to M_1\to \calO\to 0$ and $0\to M\to M_2\to \calO\to 0$ be two extensions whose image is $0$ in $H^1_f(K,T)$, i.e. their \'etale realizations are the trivial one $0\to T\to T\oplus \bZ_p\to \bZ_p\to 0$. By the equivalence of categories in \cite[Theorem 6.6.13]{bhatt22}, there exists an isomorphism $\beta:M_1\to M_2$ making the following diagram commute
 \begin{equation*}
     \xymatrix{
     0\ar[r]& M\ar[r]\ar[d]^{id}& M_1\ar[r]\ar[d]^{\beta}&\calO\ar[r]\ar[d]^{id}& 0\\
     0\ar[r]& M\ar[r]& M_2\ar[r]&\calO\ar[r]& 0.}
 \end{equation*}
\end{proof}

The map $H^1(\calO_K^\syn, M)\hookrightarrow H^1_f(K,T)$ is not surjective in general as the equivalences in By \cite[Theorem 6.6.13]{bhatt22} are not exact equivalences. But it is expected the map is surjective when the restriction $er<p-1$ is imposed. However, \cite[Proposition 6.7.3]{bhatt22} shows that $H^1(\calO_K^\syn, M)[1/p]\cong H^1_f(K,T)[1/p]$. So we lose nothing after inverting $p$ and Theorem \ref{main} shows that $H^1(\calO_K^\syn, M)$ is more controllable than $H^1_f(K,T)$ in some sense.

\begin{proof}[Proof of Theorem \ref{main}]
Note that $M$ is $p$-torsion free. There is a  short exact sequence for each $m$
\[
0\to M\xrightarrow{\times p^m}M\to M/p^m\to 0. 
\]

Then there is a long exact sequence
{\small{\[
 0\to H^1(\calO_K^\syn, M)/p^m\to H^1(\calO_K^\syn, M/p^m)\xrightarrow{\beta} H^2(\calO_K^\syn, M)\xrightarrow{\times p^m}H^2(\calO_K^\syn, M)\xrightarrow{\gamma} H^2(\calO_K^\syn, M/p^m) \to H^3(\calO_K^\syn, M)=0.
\]}}

The vanishing $H^3(\calO_K^\syn, M)=0$ follows from that $\calO_K^{\syn}$ has cohomological dimension $2$, assuming \cite[Conjecture 6.5.15]{bhatt22}. As $R\Gamma(\calO_K^\syn, M)$ is a perfect $\bZ_p$-complex and $H^2(\calO_K^\syn,M)[1/p]=0$, one can find the smallest positive integer $a_M$ such that $p^{a}H^2(\calO_K^\syn,M)=0$. Hence we get a short exact sequence for any $n\geq a_M$
\begin{equation}\label{key}
0\to H^1(\calO_K^\syn, M)/p^{n}\to H^1(\calO_K^\syn, M/p^{n})\xrightarrow{\gamma\circ \beta} H^2(\calO_K^\syn,M/p^{n})\to 0.
\end{equation}
More naturally, the composite $\gamma\circ\beta$ is nothing but the Bockstein connecting homomorphism attached to the short exact sequence
\[0\to M/p^n\xrightarrow{\times p^n}M/p^{2n}\to M/p^n\to 0.\]

To see this, consider the commuting diagram
\begin{equation*}
    \xymatrix{
    0\ar[r]& M\ar[r]^{\times p^n}\ar[d]& M\ar[r]\ar[d]&M/p^n\ar[d]^{id}\ar[r]& 0\\
    0\ar[r]& M/p^n\ar[r]^{\times p^n}& M/p^{2n}\ar[r]& M/p^n\ar[r]& 0. 
    }
\end{equation*}
This induces a commutative diagram
\begin{equation*}
    \xymatrix{
    H^1(\calO_K^\syn, M/p^n)\ar[r]^{\beta}\ar[d]^{id}& H^2(\calO_K^\syn,M)\ar[d]^{\gamma}\\
    H^1(\calO_K^\syn,M/p^n)\ar[r]^{\beta_B}& H^2(\calO_K^\syn,M/p^n)
    }
\end{equation*}
where $\beta_B$ is just the Bockstein connecting homomorphism.

Similarly, we have an integer $a_N$ for $N$. Let $a:=\max (a_M,a_N)$. Given an isomorphism $\alpha: M/p^{2n}\cong N/p^{2n}$ for some $n\geq a$, we then get an isomorphism induced by \ref{key}
\[
f_{\alpha}:H^1(\calO_K^\syn,M)/p^n\cong H^1(\calO_K^\syn,N)/p^n.
\]

The functoriality in $\alpha$ results from the functoriality of Bockstein homomorphism.
\end{proof}

\begin{rmk}
If $N$ is any  reflexive coherent  sheaf such that there is an isomorphism $M/p^{2n}\cong N/p^{2n}$ for some $n\geq a_M$, then we can always get an injection $H^1(\calO_K^\syn,N)/p^n\hookrightarrow H^1(\calO_K^\syn,M)/p^n$.
\end{rmk}

 In order to make Theorem \ref{main} more useful in practice, we need to address two issues. The first one is to find the integer $a$ depending on the given crystalline $\bZ_p$-representation. The syntomic cohomology of the associated $F$-gauge should be related to the explicit  prismatic Herr complex of the associated prismatic $F$-crystal introduced in the forthcoming work of Heng Du and Luming Zhao \cite{DZ} (in the unramified case, there is also the syntomic complex of Wach modules studied in \cite{Abhinandan}). Hence this first issue should be within reach in practice. 
 
 The second issue seems much subtler: We need to find a way to verify two coherent reflexive sheaves have isomorphic mod $p^n$ reductions. There is no explicit description of quotients of coherent reflexive sheaves or torsion prismatic $F$-crystals currently. However, we have the following observation:\\
 
 \textit{``The $G_K$-action" on the reductions of the associated Breuil--Kisin modules uniquely determine the reductions of the prismatic $F$-crystals. Together with the Nygaard filtrations, they can determine the reductions of the coherent reflexive sheaves}.\\

Next we will explain the above observation in detail. Let $(\frakS=W(k)[[u]],E)$ be the Breuil--Kisin prism in $(\calO_K)_{\Prism}$, where $k$ is the residue field of $\calO_K$ and $E$ is the Eisenstein polynomial of a fixed uniformizer $\pi$ of $\calO_K$. Recall that there is an equivalence between crystalline $\bZ_p$-representations and crystalline Breuil--Kisin modules (cf. \cite[Definition 1.1.8 and Proposition 7.1.10]{gao2023breuil}). Crystalline Breuil--Kisin modules are just finite free Breuil--Kisin modules $(\frakM,\varphi)$ together with a $G_K$-action $\frakM\to \frakM\otimes_{\frakS}\Ainf$ which is compatible with the Frobenius action and satisfies the crystallinity condition. Note that $\frakM$ is not stable under the $G_K$-action. As we can see, the category of crystalline $\bZ_p$-representations is already a full subcategory of the category of Breuil--Kisin modules with $G_K$-actions. In the mod $p^n$-case, we have the following theorem.

\begin{thm}\label{key1}
    The category $\Vect^{\varphi}((\calO_K)_{\Prism},\calO_{\Prism}/p^n)$ of prismatic $F$-crystals in vector bundles over $\calO_{\Prism}/p^n$ is a full subcategory of the category $BK(\frakS/p^n)^{G_K}$ of finite free Breuil--Kisin modules $(\frakM,\varphi)$ over $\frakS/p^n$ with $G_K$-actions $\frakM\to \frakM\otimes_{\frakS}\Ainf$.
\end{thm}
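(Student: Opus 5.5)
We construct a functor from $\Vect^{\varphi}((\calO_K)_{\Prism},\calO_{\Prism}/p^n)$ to $BK(\frakS/p^n)^{G_K}$ and then show it is fully faithful. This follows the integral argument for crystalline Breuil--Kisin modules, as in \cite{gao2023breuil}.

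\emph{Construction.} Given a prismatic $F$-crystal $\calM$ over $\calO_{\Prism}/p^n$, set $\frakM:=\calM(\frakS,E)$. This is a finite free $\frakS/p^n$-module with Frobenius. We use two maps of prisms $\iota_1,\iota_2:(\frakS,E)\to(\Ainf,\ker\theta)$ in $(\calO_K)_{\Prism}$: $\iota_1$ is the standard embedding $u\mapsto[\pi^\flat]$, and $\iota_2=g\circ\iota_1$ for $g\in G_K$. The crystal property gives isomorphisms $\frakM\otimes_{\frakS,\iota_1}\Ainf\cong\calM(\Ainf)\cong\frakM\otimes_{\frakS,\iota_2}\Ainf$. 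Combined with the natural $G_K$-action on $\calM(\Ainf)$, obtained by functoriality in the automorphisms $g$ of the prism $\Ainf$, this yields a semilinear $G_K$-action on $\frakM\otimes_{\frakS}\Ainf$ commuting with $\varphi$. Equivalently, it yields the structure map $\frakM\to\frakM\otimes_\frakS\Ainf$. Morphisms of crystals clearly induce morphisms of these data, so the functor is well defined.

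\emph{Faithfulness.} Faithfulness is easy. The map $(\frakS,E)\to$ (final object of the topos) is a cover, because the Breuil--Kisin prism covers the final object of $\mathrm{Shv}((\calO_K)_{\Prism})$. So a morphism of crystals is determined by its value on $\frakS$.

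\emph{Fullness.} This is the main step. Let $f:\frakM_1\to\frakM_2$ be a $\varphi$-equivariant map whose base change to $\Ainf$ is $G_K$-equivariant; we must descend it to a map of crystals. Since $\frakS$ covers, a crystal is the same as $\frakM$ together with a descent isomorphism $\epsilon$ over the self-coproduct $\frakS^{(1)}$ (the \v{C}ech nerve, built as a prismatic envelope), satisfying the cocycle condition. So $f$ comes from a map of crystals if and only if $f$ commutes with the descent data over $\frakS^{(1)}/p^n$. The two resulting maps $\frakM_1\otimes\frakS^{(1)}\to\frakM_2\otimes\frakS^{(1)}$ agree after every base change $\frakS^{(1)}\to\Ainf$ of the form $(\iota_1,g\iota_1)$, by $G_K$-equivariance. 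The key input is then that
\[
\frakS^{(1)}/p^n\longrightarrow \prod_{g\in G_K}\Ainf/p^n
\]
is injective. Integrally this is a known statement; it is used in \cite{gao2023breuil}, or can be proved via the perfection map $\frakS^{(1)}\to(\frakS^{(1)})_\perf\hookrightarrow$ continuous functions $C(G_K,\Ainf)$. Since $\frakM_i$ are free, injectivity for the coefficient ring gives equality of the two maps, and hence $f$ is a map of crystals.

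\emph{Main obstacle.} I expect the hardest part to be proving this injectivity mod $p^n$, rather than only integrally or after inverting $p$. Two facts must be checked: that $\frakS^{(1)}$ is $p$-torsion free and the cokernel of the integral map has no $p$-torsion, and that passing to Frobenius-perfection loses no information. I would attempt this with a $(p,E)$-completely flat argument. First, $\frakS^{(1)}$ is $(p,E)$-completely flat over $\frakS$, and $\frakS\to\Ainf$ is faithfully flat. Second, the map is injective mod $p$ (for $\frakS^{(1)}/p$ a reduced-enough ring, namely Frobenius injective after $\varphi$-twisting by $\varphi$-linearity of $\frakM$). Devissage in $n$ then handles the general case. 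If necessary, we can use that $\varphi$ on $\frakM$ becomes an isomorphism after inverting $E$ to reduce to the perfect prism $\Ainf$, where the comparison with $G_K$-invariants is cleaner.
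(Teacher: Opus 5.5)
\textbf{There is a genuine gap in your fullness step.} Your construction of the functor and your faithfulness argument are fine, and they match the paper's descent-data set-up. The problem is that you reduce fullness to injectivity of the \emph{ring} map $\frakS^{(1)}/p^n\to\prod_{g}\Ainf/p^n$, and you never prove it.

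The route you sketch does not deliver this injectivity, for three reasons.
\begin{enumerate}
\item Factoring through the perfection requires $\frakS^{(1)}/p\to A^{(1)}_\infty/p$ to be injective. That means $\frakS^{(1)}/p$ must be reduced, and nothing may be killed by completing the colimit perfection. Reducedness is a property of the ring. It does not follow from $(p,E)$-complete flatness, and it cannot come from the $\varphi$-linearity of $\frakM$: the module's Frobenius has no bearing on injectivity of a ring map. The faithful flatness of $\frakS\to\Ainf$ does not help either, since $\frakS^{(1)}\to\Ainf$ is not a base change of it.
\item At the perfect level, the identification of the self-product $A^{(1)}_{\inf}$ with $C(G_K,\Ainf)$ is only available after inverting $\xi$. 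So even the second half of your factorization is not available integrally mod $p^n$.
\item The appeal to an integral statement in Gao's paper gives nothing mod $p^n$ unless you know the cokernel is $p$-torsion free, and that is as hard as the mod $p$ statement itself.
\end{enumerate}

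\textbf{The missing idea.} The paper proves injectivity on Hom-sets of $\varphi$-modules, not on rings, and this is where the Frobenius is genuinely used.
\begin{enumerate}
\item $\frakS^{(1)}/p^n$ is $E$-torsion free, by flatness over $\frakS/p^n$. Hence a $\varphi$-equivariant map between the base changes of $\frakM_1,\frakM_2$ is determined after inverting $E$, where it becomes a map of \'etale $\varphi$-modules.
\item By a mod $p^n$ version of \cite[Theorem 4.6]{wu2021galois}, \'etale $\varphi$-modules over $\frakS^{(1)}/p^n[1/E]$ and over its perfection $A^{(1)}_\infty/p^n[1/E]$ are equivalent, so the Hom-sets are in bijection. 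This step does not care whether $\frakS^{(1)}/p$ is reduced, because \'etale sites do not see perfection.
\item $A^{(1)}_\infty\to A^{(1)}_{\inf}$ is $(p,E)$-completely faithfully flat, and $A^{(1)}_{\inf}/p^n[1/\xi]=C(G_K,\Ainf/p^n)[1/\xi]$. Hence $A^{(1)}_\infty/p^n[1/E]\to C(G_K,\Ainf/p^n)[1/\xi]$ is injective, and so are the Hom-sets.
\end{enumerate}
Your closing remark about inverting $E$ and passing to a perfect prism points in this direction. It is not yet an argument, because it lacks the full faithfulness of perfection on \'etale $\varphi$-modules. It also uses $\Ainf$ where the self-product $A^{(1)}_\infty$ is needed.
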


\begin{proof}
    Let $(\frakS^{\bullet},(E))$ be the \v Cech nerve of $(\frakS,(E))$ in $(\calO_K)_{\Prism}$. Then we have
    \begin{equation}
    \xymatrix@=0.5cm{
    \Vect^{\varphi}((\calO_K)_{\Prism},\calO_{\Prism}/p^n)\ar@<.0ex>[r]^-{\simeq} & \varprojlim (\Vect^{\varphi}(\frakS/p^n)\ar@<-.3ex>[r]\ar@<.3ex>[r] & \Vect^{\varphi}(\frakS^1/p^n)\ar@<-.6ex>[r]\ar@<-.0ex>[r]\ar@<.6ex>[r]&\Vect^{\varphi}(\frakS^2/p^n))}
  \end{equation}

  Write $(\Ainf,(\xi))$ for the Fontaine prism. Now consider the prism $(C(G_K,\Ainf), (\xi))\in (\calO_K)_{\Prism}$. There is a natural map\footnote{The $(\Ainf,(\xi))\times (\Ainf,(\xi))$ indeeds means the prism corresponding to the self-product of the object $(\Spf(\calO_K)\leftarrow\Spf(\calO_C)\to\Spf(\Ainf)$.} $(\Ainf,(\xi))\times (\Ainf,(\xi))\to (C(G_K,\Ainf), (\xi))$ where one map $(\Ainf,(\xi))\to (C(G_K,\Ainf), (\xi))$ is induced by the constant map and the other map $(\Ainf,(\xi))\to (C(G_K,\Ainf), (\xi))$ is induced by the $G_K$-action on $\Ainf$. Hence we also have a natural map $(\frakS^1,E)\to (C(G_K,\Ainf), (\xi))$. Similary, we have a natural map $(\frakS^2,E)\to (C(G_K\times G_K,\Ainf), (\xi))$

  Then the category $BK(\frakS/p^n)^{G_K}$ of Breuil--Kisin modules over $\frakS/p^n$ with $G_K$-actions is just

    \begin{equation}
    \xymatrix@=0.5cm{
      BK(\frakS/p^n)^{G_K}\ar@<.0ex>[r]^-{\simeq} & \varprojlim (\Vect^{\varphi}(\frakS/p^n)\ar@<-.3ex>[r]\ar@<.3ex>[r] & \Vect^{\varphi}(C(G_K,\Ainf/p^n))\ar@<-.6ex>[r]\ar@<-.0ex>[r]\ar@<.6ex>[r]&\Vect^{\varphi}(C(G_K\times G_K,\Ainf/p^n))}.
  \end{equation}

 Note that we have natural morphisms $\pi_i: \Vect^{\varphi}(\frakS^i/p^n)\to \Vect^{\varphi}(C(G_K^i,\Ainf/p^n))$ for $i=0,1,2$, which induces a morphism
 \[
 \pi: \Vect^{\varphi}((\calO_K)_{\Prism},\calO_{\Prism}/p^n)\to BK(\frakS/p^n)^{G_K}.
 \]

 We aim to show $\pi$ is fully faithful. Choose any $\bM,\bN\in \Vect^{\varphi}((\calO_K)_{\Prism},\calO_{\Prism}/p^n)$. We need to show $\Hom(\bM,\bN)\cong \Hom(\pi(\bM),\pi(\bN))$. Using the cosimplicial interpretations of both categories, this directly follows from that the natural map 
 \[\Hom(\bM(\frakS^1),\bN(\frakS^1))\to \Hom(\bM(C(G_K,\Ainf)),\bN(C(G_K,\Ainf)))\] is injective as it embeds into the injective composite 
  \[\Hom(\bM(\frakS^1)[\frac{1}{E}],\bN(\frakS^1)[\frac{1}{E}])\to \Hom(\bM(A_\infty^1)\frac{1}{E},\bN(A_\infty^1)[\frac{1}{E}])\to \Hom(\bM(C(G_K,\Ainf))[\frac{1}{\xi}],\bN(C(G_K,\Ainf))[\frac{1}{\xi}])\]
  where all the Hom's are taken in the category of \'etale $\varphi$-modules. Here $(A_\infty,E)$ is the perfection of the Breuil--Kisin prism $(\frakS,E)$ and $(A_\infty^1,E)$ is the product $(A_\infty,E)\times (A_\infty,E)$ in the perfect prismatic site $(\calO_K)_{\Prism}^{\perf}$, hence the perfection of $(\frakS^1,E)$. So the first map in the above composition is indeed a bijection by a mod $p^n$-version of \cite[Theorem 4.6]{wu2021galois}. The second map in the above composition is injective due to the injectivity of $A_\infty^1/p^n[1/E]\to C(G_K,\Ainf/p^n)[1/\xi]$. In fact, let $(A_{\inf}^1,\xi)$ be the self-product of $(\Ainf,\xi)$ in $(\calO_K)_{\Prism}^{\perf}$. Then $A_\infty^1\to A^1_{\inf}$ is $(p,E)$-completely faithfully flat. Note that $C(G_K,\Ainf/p^n)[1/\xi]=A_{\inf}^1/p^n[1/\xi]$. So we are done.
\end{proof}
  
\begin{rmk}
    \begin{enumerate}
        \item Unlike Kisin's theorem: There is a fully faithful functor from crystalline $\bZ_p$-representations to finite free Breuil--Kisin modules, it seems necessary to include the $G_K$-actions on the Breuil--Kisin modules in the mod $p^n$ case.
        \item We can also work with Wach modules in the unramified case. Then the Galois action is stable on the Wach modules and there is no need to base change to $\Ainf$.
    \end{enumerate}
\end{rmk}

Now we study the mod $p^n$-reductions of  reflexive coherent sheaves on  $\calO_K^\syn$.

    Let $R^\bullet$ be the $p$-completed \v Cech nerve of the map $\calO_K\to \calO_C$.  Consider $\frakF\in \Coh^{\refl}(\calO_K^\syn)$, whose pullback to $\Coh((R^i)^\syn)$ corresponds to the triple $(M(\frakF^i),\Fil^\bullet M(\frakF^i),\tilde \varphi_{M(\frakF^i)})$ as in \cite[Example 6.1.7]{bhatt22}. Note that each $M(\frakF^i)$ is a vector bundle over $\Prism_{R^i}$ and $\Fil^\bullet M(\frakF^i)$ is a saturated Nygaardian filtration of the $F$-crystal $M(\frakF^i)$ over $\Prism_{R^i}$.

    Then $(M(\frakF^i)/p^n,\Fil^\bullet M(\frakF^i)/p^n,\tilde \varphi_{M(\frakF^i)/p^n})$ corresponds to the pullback of $\frakF/p^n\in \Coh(\calO_K^\syn)$ to $\Coh((R^i)^\syn)$. We claim that  $\Fil^\bullet M(\frakF^i)/p^n\to M(\frakF^i)/p^n$ is injective. Suppose there exists $x\in M(\frakF^i)$ such that $p^nx\in \Fil^\bullet M(\frakF^i)$. Then under the map
    \[
    M(\frakF^i)\to \varphi^*M(\frakF^i)\subseteq \varphi^*M(\frakF^i)[\frac{1}{I}]\cong M(\frakF^i)[\frac{1}{I}],
    \]
    we get $p^n\varphi(x)\in I^\bullet M(\frakF^i)$, where $I$ is a generator of the kernel of $\Prism_{R^i}\to R^i$. Suppose $\varphi(x)\in I^aM(\frakF^i)$ for some $a<\bullet$. Then this induces a $p^n$-torsion in $I^aM(\frakF^i)/I^\bullet M(\frakF^i)$. However, $I^aM(\frakF^i)/I^\bullet M(\frakF^i)$ is $p$-torsion free as each $\Prism_{R^i}/I$ is a quasi-syntomic cover of $\calO_K$. Thus $\Fil^\bullet M(\frakF^i)/p^n\to M(\frakF^i)/p^n$ is injective.

However $\Fil^\bullet M(\frakF^i)/p^n\to M(\frakF^i)/p^n$ is not saturated in general. So the Nygaardian filtration on $M(\frakF^i)/p^n$ is not determined by the prismatic $F$-crystal $M(\frakF^i)/p^n$. 

Let $\Mod_{fp}^\varphi(\Prism_{R^i})$ be the category of prismatic $F$-crystals $(\calM,\varphi)$ in finitely presented modules over $\Prism_{R^i}$ . Consider the category $\Mod^\varphi_{fp}(\Prism_{R^i})^+ $ of prismatic $F$-crystals $(\calM,\varphi)\in \Mod^{\varphi}_{fp}(\Prism_{R^i})$ together with a Nygaardian filtration (cf. \cite[Definition 6.6.6]{bhatt22}).

Note that $\Fil^\bullet M(\frakF^i)\widehat\otimes_{\Prism_{R^i}}\Prism_{R^j}\cong \Fil^\bullet M(\frakF^j)$ for any map $R^i\to R^j$ in the cosimplicial ring $R^\bullet$. So we naturally have $\Fil^\bullet M(\frakF^i)/p^n\widehat\otimes_{\Prism_{R^i}}\Prism_{R^j}\cong \Fil^\bullet M(\frakF^j)/p^n$. This gives us a cosimplicial object \[(M(\frakF^\bullet)/p^n,\varphi_{M(\frakF^\bullet)/p^n},\Fil^\bullet M(\frakF^\bullet)/p^n)\] in the cosimplicial category $\Mod^\varphi_{fp}(\Prism_{R^\bullet})^+ $.

Consider $\frakF_1,\frakF_2\in \Coh^{\refl}(\calO_K^\syn)$. Let $\calF_1, \calF_2$ be their attached prismatic $F$-crystals. Then we have the following result.
\begin{prop}
 The morphisms $\frakF_1/p^n\to \frakF_2/p^n$ are bijective to the morphisms of the underlying prismatic $F$-crystals $\calF_1, \calF_2$, which induces morphisms of the Nygaardian filtrations $\Fil^\bullet M(\frakF_1^i)/p^n$, $\Fil^\bullet M(\frakF_2^i)/p^n$.
\end{prop}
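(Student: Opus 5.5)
We propose to prove the statement by descent along the $p$-completed \v Cech nerve $R^\bullet$ of $\calO_K\to\calO_C$. The map $\calO_C^{\syn}\to\calO_K^{\syn}$ is a flat cover (as used via \cite[Definition 6.6.4]{bhatt22}). Hence $\Coh(\calO_K^\syn)$ is the limit of the cosimplicial category $\Coh((R^\bullet)^\syn)$, and a morphism $\frakF_1/p^n\to\frakF_2/p^n$ amounts to a compatible family of morphisms of the pullbacks to each $(R^i)^\syn$. Since morphisms (as opposed to objects) only require matching on $R^0$ and $R^1$, it suffices to understand $\Hom$ on each $(R^i)^\syn$ and then impose the cocycle compatibility on $R^1$.

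For each $i$, $R^i$ is quasiregular semiperfectoid, so by \cite[Example 6.1.7]{bhatt22} coherent sheaves on $(R^i)^\syn$ are triples $(M,\Fil^\bullet M,\tilde\varphi)$. By the discussion preceding the proposition, the pullback of $\frakF_j/p^n$ is the triple $(M(\frakF_j^i)/p^n,\Fil^\bullet M(\frakF_j^i)/p^n,\tilde\varphi)$. The key input is the injectivity of $\Fil^\bullet M(\frakF_j^i)/p^n\to M(\frakF_j^i)/p^n$ proved just above. Because of it, the filtration is a genuine decreasing family of submodules of $M(\frakF_j^i)/p^n$, and the divided Frobenius $\tilde\varphi$ is determined by $\varphi$, since $I$ is a nonzerodivisor after reduction mod $p^n$: $\Prism_{R^i}$ is $(p,I)$-completely flat over a base where $I$ and $p$ form a regular sequence.

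Consequently, a morphism of triples is exactly a $\varphi$-equivariant map $g:M(\frakF_1^i)/p^n\to M(\frakF_2^i)/p^n$ with $g(\Fil^k)\subseteq \Fil^k$ for all $k$. The compatibility of the filtered Frobenius is automatic from $\varphi$-equivariance together with the injectivity of $I^k\otimes$ into the localization. This identifies $\Hom_{(R^i)^\syn}$ with the subset of $\Hom_{\Mod^\varphi_{fp}(\Prism_{R^i})}$ preserving filtrations, i.e.\ the category $\Mod^\varphi_{fp}(\Prism_{R^i})^+$.

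We then pass to the limit. Morphisms of prismatic $F$-crystals $\calF_1/p^n\to\calF_2/p^n$ are, by descent for the prismatic site (using that $\Prism_{R^0}$ covers the final object, as in the \v Cech description in the proof of Theorem \ref{key1}), compatible families of $\varphi$-maps on $M(\frakF_j^\bullet)/p^n$. The filtration condition is stable under base change, because $\Fil^\bullet M(\frakF^i)/p^n\widehat\otimes\Prism_{R^j}\cong\Fil^\bullet M(\frakF^j)/p^n$. It therefore suffices to check it on $R^0$, and the cosimplicial data match.

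We expect the main obstacle to be the precise identification of $\Coh((R^i)^\syn)$ for $p^n$-torsion, non-reflexive objects with triples. Namely, we must check that the equivalence of \cite[Example 6.1.7]{bhatt22} applies to the mod $p^n$ reductions, and that $\tilde\varphi$ carries no extra information beyond $\varphi$. For this we would use the injectivity of the filtration together with $I$-torsion-freeness of $M(\frakF^i)/p^n$, which follows from $M(\frakF^i)$ being a vector bundle and $(p^n,I)$ being regular on $\Prism_{R^i}$.
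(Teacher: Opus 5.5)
Your proposal is correct and follows essentially the same route as the paper, whose proof is a one-line appeal to descent along $R^\bullet$ combined with the description of coherent sheaves on $(R^i)^\syn$ as triples $(M(\frakF^i),\Fil^\bullet M(\frakF^i),\tilde\varphi)$ from \cite[Example 6.1.7 and 6.6.7]{bhatt22}. You also spell out two points the paper leaves implicit: the injectivity $\Fil^\bullet M(\frakF^i)/p^n\hookrightarrow M(\frakF^i)/p^n$ makes the mod $p^n$ object an honest filtration, and $I$ is a nonzerodivisor on $\Prism_{R^i}/p^n$, so $\tilde\varphi$ is determined by $\varphi$.
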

\begin{proof}
 This directly follows from the the description of coherent sheaves on $(R^i)^\syn$ as triples \[(M(\frakF^i), \Fil^\bullet M(\frakF^i), \tilde\varphi_{M(\frakF^i)}:\Fil^\bullet M(\frakF^i)\to I^\bullet M(\frakF^i))\] as in \cite[Example 6.1.7 and 6.6.7]{bhatt22}.

\end{proof}

Let $\calF$ be the prismaic $F$-crystal attached to $\frakF\in \Coh^{\refl}(\calO_K^\syn)$. Then the saturated Nygaardian filtration $\Fil^\bullet M(\frakF^\bullet)$ defines prismatic subcrystals $\Fil^\bullet \calF$ of $\calF$. Note that the prismatic $F$-crystals $M(\frakF^\bullet)$ over $\Prism_{R^\bullet}$ are just $\calF(\Prism_{R^\bullet},I)$. But $\Fil^\bullet \calF$ are just crystals in $(p,I)$-complete modules. Evaluating on the Breuil--Kisin prism $(\frakS,E)$, we get  submodules of $\calF(\frakS,E)$. 

\begin{lem}
    $(\Fil^\bullet \calF) (\frakS,E)$ defines a Nygaardian filtration on $\calF(\frakS,E)$, i.e. the composite map
    \[
   \pi:  \calF(\frakS,E)\xrightarrow{can} \varphi^* \calF(\frakS,E)\subseteq \varphi^* \calF(\frakS,E)[\frac{1}{E}]\cong \calF(\frakS,E)][\frac{1}{E}]
    \]
    sends $(\Fil^\bullet \calF) (\frakS,E)$ into $E^\bullet \calF(\frakS,E)$.
\end{lem}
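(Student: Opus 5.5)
The strategy is to transport the Nygaardian condition, which we already know on the perfect-ish prisms $\Prism_{R^i}$ (in particular on $\Prism_{R^0}=\Ainf$, since $R^0=\calO_C$ and $\Prism_{\calO_C}=(\Ainf,\xi)$), back to the Breuil--Kisin prism along the map $(\frakS,E)\to(\Ainf,\xi)$, and then descend the membership statement using faithful flatness of this map.

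\emph{Step 1: reduction to a single crystal-theoretic map.} The map $\pi$ is built from the crystal structure of $\calF$: the linearized Frobenius $\varphi^*\calF\to\calF$ is an isomorphism after inverting $I$, and $\pi$ is $x\mapsto$ (the image of $1\otimes x$ under this isomorphism); here $1\otimes x$ is viewed in the twisted sense, since $\varphi^*\calF$ receives $\calF$ $\varphi$-semilinearly. By the crystal property, for the morphism of prisms $g:(\frakS,E)\to(\Ainf,\xi)$ in $(\calO_K)_{\Prism}$ we have $\calF(\Ainf)=\calF(\frakS)\widehat\otimes_{\frakS}\Ainf$. Moreover, $\pi_{\Ainf}\circ g=g\circ\pi_{\frakS}$, where $\pi_{\Ainf}$ is the analogous map for $\calF(\Ainf,\xi)=M(\frakF^0)$. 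Since $g(E)$ and $\xi$ generate the same ideal of $\Ainf$, $g$ also sends $E^\bullet\calF(\frakS)$ into $\xi^\bullet\calF(\Ainf)$.

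\emph{Step 2: use the known filtration on $\Ainf$.} The subcrystal $\Fil^\bullet\calF$ evaluated on $(\Ainf,\xi)$ is $\Fil^\bullet M(\frakF^0)$, which is a Nygaardian filtration by \cite[Example 6.1.7]{bhatt22}: $\pi_{\Ainf}(\Fil^\bullet M(\frakF^0))\subseteq \xi^\bullet M(\frakF^0)$. Because $\Fil^\bullet\calF$ is a crystal, $g$ maps $(\Fil^\bullet\calF)(\frakS,E)$ into $\Fil^\bullet M(\frakF^0)$. Hence, for $x\in(\Fil^\bullet\calF)(\frakS,E)$, the element $y:=\pi_{\frakS}(x)\in\calF(\frakS)[1/E]$ satisfies $g(y)\in E^\bullet\calF(\frakS)\otimes_{\frakS}\Ainf$.

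\emph{Step 3 (main obstacle): descent of membership.} We must conclude that $y\in E^\bullet\calF(\frakS)$. Since $\calF(\frakS)$ is finite free over $\frakS$, this amounts to showing
\[
\frakS[1/E]\cap E^{m}\Ainf=E^m\frakS \quad\text{inside }\Ainf[1/E].
\]
Equivalently, if $z\in\frakS$ and $z\in E^{k}\Ainf$, then $z\in E^{k}\frakS$. This follows from faithful flatness of $\frakS\to\Ainf$: $\frakS\to A_\infty$ is $(p,E)$-completely faithfully flat, as the perfection of a regular ring, and $A_\infty\to\Ainf$ is faithfully flat as well, being a map of perfect prisms corresponding to the perfectoid cover $\widehat{\calO_{K_\infty}}\to\calO_C$. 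Faithful flatness gives $\frakS/E^k\hookrightarrow\Ainf/E^k$, which is exactly the needed statement. I expect the care to lie in handling completed faithful flatness: $\frakS/E^k$ is $p$-complete and Noetherian, so completed flatness suffices for injectivity. Alternatively, one can argue directly that $E$ is prime in $\frakS$ and remains a nonzerodivisor on $\Ainf/E^{j}$.
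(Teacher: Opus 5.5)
Your proposal is correct and is essentially the paper's argument: use that the filtration on $\calF(\Ainf,\xi)$ is Nygaardian, then descend membership in $E^\bullet\calF(\frakS,E)$ through the injection $\frakS/E^k\hookrightarrow\Ainf/E^k$, which comes from faithful flatness of $\frakS\to\Ainf$. The only difference is that the paper first reduces to effective $\calF$, so that $\pi$ already lands in $\calF(\frakS,E)$, whereas you clear $E$-denominators directly (valid, since $\Ainf$ is a domain); separately, your closing ``alternative'' is misstated, because $E$ kills $\Ainf/E^j$, but induction on $k$ using $\frakS/E=\calO_K\hookrightarrow\calO_C=\Ainf/\xi$ does give a correct elementary version.
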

\begin{proof}
Without loss of generality, we may assume $\calF$ is effective, i.e.    $\calF(\frakS,E)$ is stable under the Frobenius map.

Then $\pi((\Fil^\bullet \calF) (\frakS,E))\subseteq \calF(\frakS,E)\cap E^\bullet \calF(\frakS,E)\otimes_{\frakS}\Ainf$. As $\frakS\to \Ainf$ is faithfully flat, we have an injection $\calF(\frakS,E)/E^i\to \calF(\frakS,E)\otimes_{\frakS}\Ainf/E^i$. So $\pi((\Fil^\bullet \calF) (\frakS,E))\subseteq E^\bullet \calF(\frakS,E)$.
\end{proof}

\begin{lem}
    The Nygaardian filtration $(\Fil^\bullet \calF) (\frakS,E)$ is saturated.
\end{lem}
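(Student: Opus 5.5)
Saturated here means: if $x\in \calF(\frakS,E)$ and $\pi(x)\in E^i\calF(\frakS,E)$, then $x\in (\Fil^i\calF)(\frakS,E)$. In other words, $(\Fil^i\calF)(\frakS,E)=\pi^{-1}(E^i\calF(\frakS,E))$. The plan is to reduce this to the saturatedness of $\Fil^\bullet M(\frakF^0)$ over $\Prism_{R^0}$, which is known since $\frakF$ is reflexive, by base change along a faithfully flat map of prisms.

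The first step is to fix a map of prisms. Since $(\frakS,E)\to(\Ainf,\xi)$ is a map in $(\calO_K)_\Prism$, it is $(p,E)$-completely faithfully flat. Because $R^0=\calO_C$, the prism $\Prism_{R^0}$ is $(\Ainf,\xi)$, and $\calF(\Ainf,\xi)=\calF(\frakS,E)\otimes_\frakS\Ainf$. The crystal property of $\Fil^\bullet\calF$ gives
\[
(\Fil^i\calF)(\frakS,E)\widehat\otimes_\frakS\Ainf\cong(\Fil^i\calF)(\Ainf,\xi)=\Fil^iM(\frakF^0),
\]
and by reflexivity this last filtration is saturated. As in the previous lemma, we may assume $\calF$ is effective. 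The maps $\pi$ are compatible with base change, and $E$ and $\xi$ generate the same ideal in $\Ainf$.

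The second step is the key intersection. Take $x\in\calF(\frakS,E)$ with $\pi(x)\in E^i\calF(\frakS,E)$. Its image in $\calF(\Ainf,\xi)$ satisfies $\pi(x)\in\xi^i\calF(\Ainf,\xi)$, so saturation over $\Ainf$ gives $x\in\Fil^iM(\frakF^0)$. It then suffices to prove
\[
\Fil^iM(\frakF^0)\cap\calF(\frakS,E)=(\Fil^i\calF)(\frakS,E)
\]
inside $\calF(\Ainf,\xi)$, that is, that the natural map
\[
\calF(\frakS,E)/(\Fil^i\calF)(\frakS,E)\to\bigl(\calF(\frakS,E)/(\Fil^i\calF)(\frakS,E)\bigr)\widehat\otimes_\frakS\Ainf
\]
is injective. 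The quotient $Q=\calF(\frakS,E)/(\Fil^i\calF)(\frakS,E)$ is finitely generated over the Noetherian ring $\frakS$: it is a quotient of a finite free module, and $(\Fil^i\calF)(\frakS,E)\supseteq \varphi$-pullback of $E^i$-multiples, so $Q$ is $E^i$-torsion up to a bounded amount. For finitely generated modules over Noetherian $\frakS$, the completed tensor product agrees with the ordinary one, and faithful flatness gives the injectivity $Q\to Q\otimes_\frakS\Ainf$.

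The main obstacle is making this injectivity argument rigorous. One needs $(\Fil^i\calF)(\frakS,E)$ to be finitely generated, or at least closed, so that the completed and uncompleted tensor products coincide, and one needs the crystal property of $\Fil^\bullet\calF$ to give an honest completed base change with the equality above. I would first try to show that $E^{i}\calF(\frakS,E)\subseteq(\Fil^i\calF)(\frakS,E)$ after twisting appropriately, using effectivity and the relation $\pi(E^i y)=\varphi(E)^i\pi(y)$. Then $Q$ becomes a finitely generated $\frakS/E^{i}$-module, which is automatically complete, and the descent argument applies. If that containment fails, the fallback is the \v Cech nerve approach: express $(\Fil^i\calF)(\frakS,E)$ as an equalizer along $\frakS\to\frakS^1$, and compare with the known saturated filtrations at each level.
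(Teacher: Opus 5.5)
Your argument is correct and is essentially the paper's: both use saturatedness of the filtration over $(\Ainf,\xi)$ to place $\pi^{-1}(E^i\calF(\frakS,E))$ inside $(\Fil^i\calF)(\frakS,E)\otimes_{\frakS}\Ainf$, and then descend along the faithfully flat map $\frakS\to\Ainf$. The paper phrases the descent as the equality of $\pi^{-1}(E^i\calF(\frakS,E))\otimes_{\frakS}\Ainf$ and $(\Fil^i\calF)(\frakS,E)\otimes_{\frakS}\Ainf$, while you phrase it as injectivity of $Q\to Q\otimes_{\frakS}\Ainf$; both are the same faithful-flatness fact. Your ``main obstacle'' is not one: $(\Fil^i\calF)(\frakS,E)$ is a submodule of a finite free module over the Noetherian ring $\frakS$, hence finitely generated, so completed and ordinary base change agree and the detour through $E^i\calF(\frakS,E)\subseteq(\Fil^i\calF)(\frakS,E)$ is unnecessary (that inclusion is also not implied by $\pi(E^iy)=\varphi(E)^i\pi(y)$, since $\varphi(E)\notin E\frakS$); just note that faithful flatness of $\frakS\to\Ainf$ is a specific known fact, not a consequence of its being a map in $(\calO_K)_\Prism$.
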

\begin{proof}
    As the Nygaardian filtration on $\calF(\Ainf,\xi)$ is saturated (note that $(\Ainf,\xi)=(\Prism_R,I)$), we see $\pi^{-1}(E^\bullet \calF(\frakS,E))$ is contained in $\Fil^\bullet \calF(\Ainf,\xi)$. Hence \[\pi^{-1}(E^\bullet \calF(\frakS,E))\otimes_{\frakS}\Ainf\hookrightarrow \Fil^\bullet \calF(\Ainf,\xi).\] But we have \[(\Fil^\bullet \calF) (\frakS,E)\otimes_{\frakS}\Ainf\cong \Fil^\bullet \calF(\Ainf,\xi).\] This forces $(\Fil^\bullet \calF) (\frakS,E)=\pi^{-1}(E^\bullet \calF(\frakS,E))$.
\end{proof}

Similarly, the Nygaardian filtration $\Fil^\bullet M(\frakF^\bullet)/p^n$, which  might not be saturated, defines prismatic subcrystals $\Fil^\bullet \calF/p^n$ of $\calF/p^n$.

\begin{cor}\label{key2}
     The morphisms $\frakF_1/p^n\to \frakF_2/p^n$ are bijective to the morphisms of the underlying prismatic $F$-crystals of $\calF_1/p^n, \calF_2/p^n$, which induce morphisms of the Nygaard filtrations on the associated Breuil--Kisin modules.
\end{cor}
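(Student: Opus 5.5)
The plan is to combine the preceding Proposition with the mod $p^n$ analogue of the two lemmas on $(\Fil^\bullet\calF)(\frakS,E)$. The Proposition says that a morphism $\frakF_1/p^n\to\frakF_2/p^n$ is the same thing as a morphism $f:\calF_1/p^n\to\calF_2/p^n$ of prismatic $F$-crystals that carries $\Fil^\bullet M(\frakF_1^i)/p^n$ into $\Fil^\bullet M(\frakF_2^i)/p^n$ for every $i$. Since these filtrations glue to prismatic subcrystals $\Fil^\bullet\calF_j/p^n\subseteq\calF_j/p^n$, this condition says exactly that $f$ maps the subcrystal $\Fil^\bullet\calF_1/p^n$ into $\Fil^\bullet\calF_2/p^n$. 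The corollary therefore reduces to one claim: for a morphism $f$ of $F$-crystals mod $p^n$, $f$ preserves these subcrystals if and only if $f(\frakS,E)$ maps $(\Fil^\bullet\calF_1/p^n)(\frakS,E)$ into $(\Fil^\bullet\calF_2/p^n)(\frakS,E)$. By Theorem \ref{key1}, the morphism $f$ itself is already determined by its image in $BK(\frakS/p^n)^{G_K}$, so everything can be phrased on Breuil--Kisin modules.

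The forward direction is immediate: evaluate the inclusion of crystals on $(\frakS,E)$.

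For the converse, I would argue by base change. Since $\Fil^\bullet\calF_j/p^n$ is a crystal in $(p,I)$-complete modules, its value on any prism $(B,J)$ receiving a map from $(\frakS,E)$ is the completed base change of its value on $(\frakS,E)$. The plan has three steps.
\begin{enumerate}
\item Show that $(\frakS,E)$ covers the final object of $\mathrm{Shv}((\calO_K)_\Prism)$.
\item Deduce that the values on $(\frakS^1,E)$, and on every prism $(B,J)$ admitting a map from $(\frakS,E)$, are completed base changes along such maps.
\item Conclude that if $f(\frakS,E)$ preserves the filtration, then so does $f(B,J)=f(\frakS,E)\widehat\otimes B$, since the image of a completed base change of a submodule lands in the completed base change of the target submodule.
\end{enumerate}
Since preservation of subcrystals can be checked on a cover, this gives the converse. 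The same argument works after first passing to $(\Ainf,\xi)=(\Prism_{R^0},I)$ via $\frakS\to\Ainf$, which connects it with the $R^\bullet$-description used in the Proposition.

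The main obstacle is the base-change step in the mod $p^n$ setting, because $\Fil^\bullet\calF/p^n$ need not be saturated. Its value on $(\frakS,E)$ is defined by evaluating a crystal, not by a saturation condition, so the lemma identifying it with $\pi^{-1}(E^\bullet\cdot)$ is not available. I would first check that $\Fil^\bullet\calF/p^n=(\Fil^\bullet\calF)/p^n$ as crystals, which is the injectivity statement proved earlier for $\Fil^\bullet M(\frakF^i)/p^n\to M(\frakF^i)/p^n$. Reduction mod $p^n$ then commutes with evaluation and with base change. I would also need that the images of these subcrystal values in $\calF_2(B)/p^n$ are compatible with base change; here I would use faithful flatness of $\frakS\to\Ainf$ and $(p,E)$-complete flatness of $\frakS^1\to A^1_{\inf}$, as in the proof of Theorem \ref{key1}, to keep the relevant maps injective mod $p^n$.
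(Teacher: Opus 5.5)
The step that fails is the one your argument rests on. You assume that $\Fil^\bullet\calF$, and hence $\Fil^\bullet\calF/p^n$, is a crystal in $(p,I)$-complete modules, so that its value on any prism under $(\frakS,E)$ is the completed base change of its value on $(\frakS,E)$. Nygaard filtrations do not satisfy this termwise base change.

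Here is a counterexample. Take $K=\bQ_p$, $\pi=p$, $E=u-p$, the trivial gauge $\frakF=\calO$, and $\frakS\to\Ainf$ given by $u\mapsto[p^\flat]$, so that $E\mapsto\xi=[p^\flat]-p$.
\begin{itemize}
\item On the Breuil--Kisin side the saturated filtration is $\Fil^1\frakS=\varphi^{-1}(E\frakS)=\{x : x(p^p)=0\}=(u-p^p)$.
\item On $\Ainf=\Prism_{R^0}$ it is the Nygaard filtration $\Fil^1\Ainf=\varphi^{-1}(\xi)\Ainf=([p^\flat]^{1/p}-p)\Ainf$.
\item We have $[p^\flat]-p^p=([p^\flat]^{1/p}-p)\,c$ with $c\equiv[p^\flat]^{(p-1)/p}$ modulo $p$. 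This $c$ is a non-unit of the local domain $\Ainf$, so $\Fil^1\frakS\cdot\Ainf\subsetneq\Fil^1\Ainf$.
\item Already modulo $p$ this reads $p^\flat\calO_C^\flat\subsetneq(p^\flat)^{1/p}\calO_C^\flat$.
\end{itemize}
No other choice of filtration on $\frakS$ can repair this. By faithful flatness, any ideal of $\frakS$ whose extension is $\Fil^1\Ainf$ must equal $\Fil^1\Ainf\cap\frakS=(u-p^p)$.

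This breaks each part of your argument:
\begin{itemize}
\item Your Step 2 is false for $(B,J)=(\Ainf,\xi)$.
\item Your Step 3 only controls $f$ on the base change of the Breuil--Kisin filtration of $\calF_1$. That can be strictly smaller than $\Fil^\bullet M(\frakF_1^i)/p^n$, so it does not show $f$ preserves the latter.
\item Your forward direction (``evaluate the inclusion of subcrystals on $(\frakS,E)$'') has no subcrystal to evaluate.
\end{itemize}

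The paper's one-line proof invokes exactly this base-change assertion. So does the isomorphism $(\Fil^\bullet\calF)(\frakS,E)\otimes_{\frakS}\Ainf\cong\Fil^\bullet\calF(\Ainf,\xi)$ in its saturation lemma, which the example contradicts. You have followed the paper's route faithfully and inherited its gap rather than introduced one.

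For the converse, what you would actually need is a \emph{filtered} base change: $\Fil^k M(\frakF^i)$ should be the closure of $\sum_{a+b=k}\calN^{\ge a}\Prism_{R^i}\cdot\Fil^b\calF(\frakS,E)$. One might try to get this by pulling $\frakF$ back along a map from the Rees stack of $(\frakS,\varphi^{-1}(E^\bullet))$ to $\calO_K^{\calN}$, then identifying the result with the saturated filtration. Granting that, your Step 3 goes through unchanged, since $f$ is linear and the factors $\calN^{\ge a}\Prism_{R^i}$ act as scalars.

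The forward direction would still need its own argument, for instance
$(\Fil^k\calF_2(\Ainf)+p^n\calF_2(\Ainf))\cap\calF_2(\frakS)=\Fil^k\calF_2(\frakS)+p^n\calF_2(\frakS)$.
Integrally this is the saturation argument. Modulo $p^n$ it amounts to showing that $\calF_2(\Ainf)/(\Fil^k\calF_2(\Ainf)+\calF_2(\frakS))$ has no $p$-torsion.
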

\begin{proof}
    This follows from that the Nygaardian filtrations $\Fil^\bullet M(\frakF_1^i)/p^n$, $\Fil^\bullet M(\frakF_2^i)/p^n$ are all the base change of the Nygaardian filtrations on the associated Breuil--Kisin modules.
\end{proof}

let $\Coh^{\refl}(\calO_K^\syn)/p^n$ denote the full subcategory of $\Coh(\calO_K^\syn)$ generated by all mod $p^n$-reductions of objects in $\Coh^{\refl}(\calO_K^\syn)$. Let $BK^{G_K}_{\calN}(\frakS/p^n)$ denote the category of finite free Breuil--Kisin modules over $\frakS/p^n$ with $G_K$-actions and Nygaardian filtrations.

Finally, we can state the second main theorem.
\begin{thm}\label{main2}

The natural functor $\Coh^{\refl}(\calO_K^\syn)/p^n\to BK^{G_K}_{\calN}(\frakS/p^n)$ is fully faithful. In particular,
   for $\frakF_1,\frakF_2\in \Coh^{\refl}(\calO_K^\syn)$, there is an isomorphism $\frakF_1/p^n\simeq \frakF_2/p^n$ if and only if there is an isomorphism of the mod $p^n$-reductions of the associated Breuil--Kisin modules with $G_K$-actions and Nygaard filtrations.
\end{thm}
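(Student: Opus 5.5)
The plan is to combine Corollary \ref{key2} with Theorem \ref{key1}. First, I will make the functor explicit. Given $\frakF\in \Coh^{\refl}(\calO_K^\syn)$, the reduction $\frakF/p^n$ goes to the following triple: the Breuil--Kisin module $\calF(\frakS,E)/p^n$, which is finite free over $\frakS/p^n$ because $\calF(\frakS,E)$ is finite free over $\frakS$; the $G_K$-action $\calF(\frakS,E)/p^n\to \calF(\frakS,E)/p^n\otimes_{\frakS}\Ainf$ coming from the crystal structure, as in the proof of Theorem \ref{key1}; and the Nygaardian filtration $(\Fil^\bullet\calF)(\frakS,E)/p^n$. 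This last filtration is Nygaardian by the lemma above. It injects into $\calF(\frakS,E)/p^n$ by the same $p$-torsion-freeness argument used for $\Fil^\bullet M(\frakF^i)/p^n$, now with $\frakS/E=\calO_K$ in place of $\Prism_{R^i}/I$. Morphisms $\frakF_1/p^n\to\frakF_2/p^n$ visibly go to morphisms of such triples.

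For faithfulness, Corollary \ref{key2} identifies $\Hom(\frakF_1/p^n,\frakF_2/p^n)$ with a subset of $\Hom(\calF_1/p^n,\calF_2/p^n)$ in $\Vect^{\varphi}((\calO_K)_{\Prism},\calO_{\Prism}/p^n)$. Theorem \ref{key1} says the latter injects into $\Hom$ in $BK(\frakS/p^n)^{G_K}$. Composing the two gives faithfulness.

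For fullness, I take a morphism $g$ in $BK^{G_K}_{\calN}(\frakS/p^n)$ between the images. Forgetting the filtration, $g$ is a morphism in $BK(\frakS/p^n)^{G_K}$, so Theorem \ref{key1} lifts it uniquely to a morphism of prismatic $F$-crystals $h:\calF_1/p^n\to\calF_2/p^n$. By Corollary \ref{key2}, $h$ comes from a morphism $\frakF_1/p^n\to\frakF_2/p^n$ exactly when $h$ respects the Nygaard filtrations on the associated Breuil--Kisin modules. Since $h(\frakS,E)=g$ and $g$ preserves the filtrations by assumption, this holds. The statement about isomorphisms then follows formally, because a fully faithful functor reflects isomorphisms.

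I expect the main obstacle to be hidden in Corollary \ref{key2}. One needs that preserving the filtration at $(\frakS,E)$ forces preservation of the filtrations $\Fil^\bullet M(\frakF^i)/p^n$ on all $\Prism_{R^i}$. The paper justifies this by the base-change identity $\Fil^\bullet M(\frakF^i)/p^n\cong (\Fil^\bullet\calF)(\frakS,E)/p^n\widehat\otimes\Prism_{R^i}$, which rests on the crystal property of $\Fil^\bullet\calF$ and the saturation lemma. I would make this explicit. The map $h$ on $\Prism_{R^i}$ is the base change of $g$, so it maps the completed base change of $\Fil^\bullet_1$ into that of $\Fil^\bullet_2$. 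The images in $M(\frakF_2^i)/p^n$ are the correct filtrations, by the injectivity shown earlier. I would check this compatibility along the map $\frakS\to\Ainf=\Prism_{R^0}$ and then along the coface maps.
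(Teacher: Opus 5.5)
Your proposal is correct and follows the paper's own route. The paper proves Theorem \ref{main2} in one line as a direct consequence of Theorem \ref{key1} and Corollary \ref{key2}; your split into faithfulness and fullness (lift $g$ through Theorem \ref{key1}, then apply Corollary \ref{key2} to see that the lift respects the filtrations) unpacks that argument, and your remarks on the base-change identity for $\Fil^\bullet M(\frakF^i)/p^n$ and on the injectivity of $(\Fil^\bullet\calF)(\frakS,E)/p^n\to\calF(\frakS,E)/p^n$ spell out what the paper uses implicitly in Corollary \ref{key2}.
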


\begin{proof}
    This directly follows from Theorem \ref{key1} and Corollary \ref{key2}.
\end{proof}

    Combining Theorems \ref{main} and \ref{main2}, we then get a ``practical" way to study congruences of first syntomic cohomology groups.

\section*{Acknowledgement}
We would like to thank Yupeng Wang for pointing out an error in an early draft.

\addcontentsline{toc}{section}{References}

\bibliographystyle{alpha}
\bibliography{bibliography}

@article{bhatt22,
  title={Prismatic {F}-gauges},
  author={Bhatt, Bhargav and Lurie, Jacob},
  journal={Lecture notes, available at https://www. math. ias. edu/\~{} bhatt/teaching/mat549f22/lectures. pdf},
  year={2022}
}

@article{wu2021galois,
  title={Galois representations,($\phi$, $\Gamma$)-modules and prismatic $F$-crystals},
  author={Wu, Zhiyou},
  journal={Doc. Math},
  volume={26},
  pages={1771--1798},
  year={2021}
}

@article{DZ,
  title={A prismatic {H}err complex for {B}loch--{K}ato {S}elmer groups},
  author={Du, Heng and Zhao, Luming},
  journal={forthcoming},
  year={}
}

@article{gao2023breuil,
  title={{B}reuil--{K}isin modules and integral $p$-adic {H}odge theory},
  author={Gao, Hui},
  journal={Journal of the European Mathematical Society},
  volume={25},
  number={10},
  pages={3979--4032},
  year={2023},
  publisher={EMS Press}
}

@article{Abhinandan,
  title={Crystalline part of the {G}alois cohomology of crystalline representations},
  author={Abhinandan},
  journal={to appear in Bulletin de la Société Mathématique de France},
  year={}
}

@article{iovita15,
  title={On the continuity of the finite {B}loch--{K}ato cohomology},
  author={Iovita, Adrian and Marmora, Adriano},
  journal={Rendiconti del Seminario Matematico della Universit{\`a} di Padova},
  volume={134},
  pages={239--272},
  year={2015}
}
\end{document}